\newlength{\defbaselineskip}
\newcommand{\setlinespacing}[1]%
           {\setlength{\baselineskip}{#1 \defbaselineskip}}
\numberwithin{equation}{section}
\newtheorem{thm}{Theorem}[section]
\newtheorem{lem}[thm]{Lemma}
\newtheorem{prop}[thm]{Proposition}
\theoremstyle{definition}
\theoremstyle{remark}
\newtheorem{rem}[thm]{Remark}
\numberwithin{equation}{section}
\begin{document}
\title[Local smoothing and Strichartz estimates]{Local smoothing and Strichartz estimates for the Klein-Gordon equation with the inverse-square potential}

\author{Hyeongjin Lee, Ihyeok Seo and Jihyeon Seok}

\thanks{This research was supported by NRF-2019R1F1A1061316.}

\subjclass[2010]{Primary: 35B45; Secondary: 35Q40}
\keywords{Smoothing estimates, Strichartz estimates, Klein-Gordon equation}

\address{Department of Mathematics, Sungkyunkwan University, Suwon 16419, Republic of Korea}
\email{hjinlee@skku.edu}

\email{ihseo@skku.edu}

\email{jseok@skku.edu}

\begin{abstract}
We prove weighted $L^2$ estimates for the Klein-Gordon equation perturbed with singular potentials such as the inverse-square potential.
We then deduce the well-posedness of the Cauchy problem for this equation with small perturbations,
and go on to discuss local smoothing and Strichartz estimates which improve previously known ones. 	
\end{abstract}

\maketitle

\section{Introduction}		
Consider the Cauchy problem for the Klein-Gordon equation with a potential $V$:
\begin{equation}\label{KGE}
\begin{cases}
\partial_{t}^2 u-\Delta u+V(x)u+u = 0,\\
u(x,0)=f(x),\\
\partial_{t} u(x,0)=g(x),
\end{cases}
\end{equation}
where $(x,t)\in\mathbb{R}^{n}\times\mathbb{R}$ and $\Delta$ is the $n$ dimensional Laplacian.

In \cite{D'A}, D'Ancona established the following smoothing estimate (also known as \textit{local energy decay})
for the Klein-Gordon flow with $|V|\sim|x|^{-2}$
\begin{equation}\label{sd}
\big\||x|^{-1}e^{it\sqrt{-\Delta+V+1}}f\big\|_{L_{x,t}^2}\lesssim\|f\|_{H^{1/2}}
\end{equation}
by extending Kato's $H$-smoothing theory developed in \cite{K,KY} to the flow.
Recently, it was shown in \cite{D'A2} that the Klein-Gordon equation can have a solution with much smoothness locally as
\begin{equation}\label{LS0}
\sup_{R>0} \frac{1}{R} \int_{|x|<R} \int_{-\infty}^\infty
\big||\nabla|^{1/2}e^{it\sqrt{-\Delta+V+1}}f\big|^2 dtdx \lesssim \|f\|_{H^{1/2}}^2
\end{equation}
with $V$ such that for $\varepsilon>0$ and $\delta>0$
\begin{equation}\label{im}
|V(x)|\sim
\begin{cases}
|x|^{-2}|\log|x||^{-(1+\delta)}\quad\text{near the origin},\\
|x|^{-(2+\varepsilon)}|\log|x||^{-(1+\delta)}\quad\text{at infinity}.
\end{cases}
\end{equation}
At this point, it is worth noting that this local smoothing estimate can be written in terms of a weighted $L^2$ norm as in \eqref{sd}.
Indeed, if $\rho$  is any function such that $\sum_{j\in\mathbb{Z}}\|\rho\|_{L^\infty(|x|\sim2^j)}^2<\infty$,
one has
$$\|\rho|x|^{-1/2}v\|_{L_{x,t}^2}^2\lesssim\sup_{R>0} \frac{1}{R} \int_{|x|<R} \int_{-\infty}^\infty|v|^2 dtdx.$$
A typical example of such $\rho$ is given by
$$\rho=\sqrt{1+(\log|x|)^2}^{-(1/2+\varepsilon)},\quad \varepsilon>0,$$
as mentioned in \cite{D'A2}, and then \eqref{LS0} implies a weaker estimate,
\begin{equation*}
\|\rho|x|^{-1/2}|\nabla|^{1/2}e^{it\sqrt{-\Delta+V+1}}f\|_{L_{x,t}^2} \lesssim \|f\|_{H^{1/2}}.
\end{equation*}

Since the critical behavior for dispersion appears to be $|V|\sim|x|^{-2}$,
recent studies for perturbed dispersive equations have intensively aimed to get as close as possible to this inverse-square potential.
One of the main aims of this paper is to obtain the local smoothing estimate \eqref{LS0}
allowing small perturbations with the inverse-square potential that improves \eqref{im}.
More generally, we consider Fefferman-Phong potentials $V \in {\mathcal{F}}^p$ with small $\|V\|_{\mathcal{F}^{p}}$.
They are indeed defined for $1\leq p\leq n/2$ by
\begin{equation*}
\|V\|_{\mathcal{F}^p} = \sup_{x \in \mathbb{R}^n, r>0}r^{2-n/p}
\bigg(\int_{|y-x|<r} |V(y)|^p dy\bigg)^{\frac{1}{p}} < \infty.
\end{equation*}
It then follows that $L^{n/2} = \mathcal{F}^{n/2}$ and $|x|^{-2} \in L^{n/2, \infty} \subsetneq \mathcal{F}^p$ if $1 \le p < n/2$.
Let us now denote by $L^2(w)$ a weighted $L^2$ space equipped with the norm $\|f\|_{L^2(w)}=\big(\int|f(x)|^2w(x)dx\big)^{1/2}$.
Our main result is then the following theorem in which \eqref{sd} is generalized by \eqref{WL2} and \eqref{LS0} is improved by \eqref{LS}.

\begin{thm}\label{thm1}	
Let $n \ge 3$ and $V \in \mathcal{F}^{p}$ with $\|V\|_{\mathcal{F}^{p}}$ small enough for $p > (n-1)/2$.
Then there exists a unique solution $u\in L_{x,t}^2(|V|)$ to \eqref{KGE}
with Cauchy data $(f,g)\in H^{1/2}\times H^{-1/2}$ such that
\begin{equation}\label{cont2}
u\in C([0,\infty);H^{1/2}(\mathbb{R}^n))\quad\text{and}\quad
u_t \in C([0,\infty);H^{-1/2}(\mathbb{R}^{n})).
\end{equation}
Furthermore,
\begin{equation}\label{WL2}
\|u\|_{L_{x,t}^2 (|V|)} \lesssim \|V\|_{\mathcal{F}^{p}}^{1/2}\big(\|f\|_{H^{1/2}} + \|g\|_{H^{-1/2}}\big)
\end{equation}
and
\begin{equation}\label{LS}
\sup_{x_0\in\mathbb{R}^n,R>0} \frac{1}{R} \int_{|x-x_0|<R} \int_{-\infty}^\infty \big||\nabla|^{1/2}u\big|^2dtdx
\lesssim \|f\|_{H^{1/2}}^{2} + \|g\|_{H^{-1/2}}^{2}.
\end{equation}
\end{thm}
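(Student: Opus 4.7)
The plan is to treat \eqref{KGE} as a perturbation of the free Klein-Gordon equation and set up a contraction in the weighted Hilbert space $X=L^2_{x,t}(|V|)$ via Duhamel's formula. Writing $\langle D\rangle=\sqrt{-\Delta+1}$, a solution to \eqref{KGE} satisfies
\begin{equation*}
u(t)=\cos(t\langle D\rangle)f+\frac{\sin(t\langle D\rangle)}{\langle D\rangle}g-\int_0^t\frac{\sin((t-s)\langle D\rangle)}{\langle D\rangle}Vu(s)\,ds=:u_0(t)-Tu(t).
\end{equation*}
The single analytic input I intend to use is the free weighted $L^2$ estimate
\begin{equation*}
\bigl\||V|^{1/2}e^{\pm it\langle D\rangle}h\bigr\|_{L^2_{x,t}}\lesssim \|V\|_{\mathcal{F}^p}^{1/2}\|h\|_{H^{1/2}},
\end{equation*}
which I would establish beforehand (under the hypothesis $p>(n-1)/2$) from Kato $H$-smoothing combined with a resolvent bound of Fefferman-Phong type for $-\Delta+1$.

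Splitting $\cos$ and $\sin/\langle D\rangle$ into exponentials, this immediately yields $\|u_0\|_X\lesssim \|V\|_{\mathcal{F}^p}^{1/2}(\|f\|_{H^{1/2}}+\|g\|_{H^{-1/2}})$. For the Duhamel term $T$, a $TT^*$ argument with the dual of the free estimate gives the symmetric bilinear bound
\begin{equation*}
\Bigl\||V|^{1/2}\int_{\mathbb{R}}\frac{\sin((t-s)\langle D\rangle)}{\langle D\rangle}Vu(s)\,ds\Bigr\|_{L^2_{x,t}}\lesssim \|V\|_{\mathcal{F}^p}\|u\|_X,
\end{equation*}
and passage to the retarded integral is handled by the standard Kato self-adjointness machinery, i.e.\ decomposing $\chi_{s<t}=\tfrac12+\tfrac12\mathrm{sgn}(t-s)$ and controlling the $\mathrm{sgn}$-piece through a Fourier-in-time/Plancherel argument native to Kato smoothing theory. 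The upshot is $\|Tu\|_X\leq C\|V\|_{\mathcal{F}^p}\|u\|_X$, so under the smallness hypothesis the map $u\mapsto u_0-Tu$ is a strict contraction on $X$, yielding a unique $u\in X$ that satisfies \eqref{WL2}.

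For the local smoothing estimate \eqref{LS} I would repeat the scheme with the weight $W_{x_0,R}(x)=R^{-1}\chi_{B(x_0,R)}(x)$, whose Fefferman-Phong norm is bounded uniformly in $(x_0,R)$. The $W_{x_0,R}$-analogue of the free estimate, together with the commutation $|\nabla|^{1/2}e^{it\langle D\rangle}=e^{it\langle D\rangle}|\nabla|^{1/2}$, bounds the free contribution $|\nabla|^{1/2}u_0$ by $\|f\|_{H^{1/2}}+\|g\|_{H^{-1/2}}$ uniformly. The Duhamel contribution $|\nabla|^{1/2}Tu$ is controlled by pairing the dual $W_{x_0,R}$-estimate against the source $Vu\in L^2_{x,t}(|V|^{-1})$ whose norm is $\|u\|_X$: the pairing picks up one factor of $\|V\|_{\mathcal{F}^p}^{1/2}$ from dualization and another from \eqref{WL2}, the resulting $\|V\|_{\mathcal{F}^p}$ being $O(1)$. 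The continuity \eqref{cont2} then follows by a standard density argument: for Schwartz data $u_0\in C_tH^{1/2}\cap C^1_tH^{-1/2}$ by functional calculus, while $H^{1/2}$-continuity of $Tu$ reduces to local-in-time integrability of $\|Vu(s)\|_{H^{-1/2}}$, which is provided by the Fefferman-Phong Sobolev embedding together with \eqref{WL2}.

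The main obstacle is the borderline passage from the symmetric $TT^*$ bound for the full-line integral to the retarded operator $T$ at the $L^2_t$ endpoint, where the Christ-Kiselev lemma is unavailable; the Fourier-in-time decomposition above is the most delicate step and relies crucially on the self-adjointness of $\langle D\rangle$ and on the precise oscillatory structure of the Klein-Gordon sine-kernel.
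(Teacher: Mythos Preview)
Your overall architecture---Duhamel formula plus contraction in $X=L^2_{x,t}(|V|)$, with the free estimate feeding both the data term and (via $TT^*$/Kato supersmoothing) the retarded Duhamel operator---is exactly the paper's strategy. The paper proves the free estimate \eqref{homo} by Plancherel in $t$ plus the weighted Fourier restriction bound, and handles the retarded inhomogeneous estimate \eqref{WLinhomo} by writing the Duhamel solution as a space--time Fourier/resolvent piece $\tilde u$ plus a homogeneous remainder $R$; after Plancherel in $t$ this reduces to the Fefferman--Phong resolvent bound for $-\Delta$. Your $\tfrac12+\tfrac12\,\mathrm{sgn}$ decomposition is the abstract Kato-smoothing rephrasing of the same computation, so there is no real obstacle at the $L^2_t$ endpoint and no need to worry about Christ--Kiselev here.

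There is, however, a genuine gap in your treatment of the local smoothing estimate \eqref{LS}. Your claim that $W_{x_0,R}=R^{-1}\chi_{B(x_0,R)}$ has uniformly bounded Fefferman--Phong norm is false: a direct computation gives $\|W_{x_0,R}\|_{\mathcal{F}^p}\sim R$ for every $p\in((n-1)/2,\,n/2]$ (take $y=x_0$, $r=R$). Consequently the weighted machinery you invoke does \emph{not} produce the free local smoothing bound, nor the mixed inhomogeneous bound you need for the Duhamel piece. The paper supplies two separate ingredients here: the Agmon--H\"ormander trace lemma (Lemma~\ref{tracethm}) for the homogeneous flow, and Ruiz--Vega's Morrey--Campanato resolvent estimate \eqref{LS-helm} for the inhomogeneous part. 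Both are genuinely different from the $\mathcal{F}^p$ restriction/resolvent input and cannot be recovered from it by choosing $W_{x_0,R}$ as the weight.

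A smaller issue: your continuity argument for $Tu\in C_tH^{1/2}$ asks for $\|Vu(s)\|_{H^{-1/2}}\in L^1_{\mathrm{loc}}$, which by duality would require $\||V|^{1/2}h\|_{L^2}\lesssim\|h\|_{H^{1/2}}$. The Fefferman--Phong embedding only gives $H^1$ on the right, so this step does not go through as written. The paper avoids this by using the dual estimate \eqref{inhomo}, which directly yields $\sup_t\|\mathscr{S}u(t)\|_{H^{1/2}}\lesssim\|V\|_{\mathcal{F}^p}^{1/2}\|u\|_{L^2_{x,t}(|V|)}$.
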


\begin{rem}\label{rem2}
Our theorem particularly considers perturbations of the type $a/|x|^2$ with a small $|a|>0$.
We would like to mention that this smallness condition is a natural restriction
when applying the quadratic form techniques to define the Schr\"odinger operator $-\Delta+a/|x|^2$ in some cases.
See \cite{RS}, p. 172.
\end{rem}

\begin{rem}
The estimate \eqref{LS} shows that the energy in a cylinder $\{(x,t)\in\mathbb{R}^n\times\mathbb{R}:|x-x_0|<R\}$
decays as the square root of the radius $R$. In this regard, it may be also seen as a local energy decay.
\end{rem}

One of the key ingredients in the proof of Theorem \ref{thm1} is weighted $L^2$ estimates for solutions of the inhomogeneous Klein-Gordon
equation $\partial_{t}^2 u-\Delta u+u =F(x,t)$, which are obtained in Sections \ref{sec2} and \ref{sec3}.
Making use of these estimates, we also obtain the following Strichartz estimate for the perturbed Klein-Gordon equation \eqref{KGE}.
\begin{thm}\label{thm2}
Let $n \ge 3$. Assume that $u$ is a solution of \eqref{KGE} with Cauchy data $(f, g) \in H^{1/2}\times H^{-1/2}$ and potential
$V \in \mathcal{F}^p$ with small $\|V\|_{\mathcal{F}^p}$ for $p > (n-1)/2$.
Then we have
	\begin{equation}\label{Strichartz}
	\|u\|_{L_t^q(\mathbb{R};H_r^\sigma(\mathbb{R}^n))} \lesssim (1 + \|V\|_{\mathcal{F}^p}) \big( \|f\|_{H^{1/2}} + \|g\|_{H^{-1/2}} \big)
	\end{equation}
where the pairs $q>2$ and $r\geq2$ satisfy the admissible condition for $0 \le \theta \le 1$
	\begin{equation}\label{admissible0}
	\frac{2}{q} + \frac{n-1+\theta}{r} \le \frac{n-1+\theta}{2},
	\end{equation}
and $\sigma\geq0$ satisfies the gap condition
\begin{equation}\label{admissible}
\sigma = \frac{1}{q} + \frac{n+\theta}{r} -\frac{n-1+\theta}{2}.
\end{equation}
\end{thm}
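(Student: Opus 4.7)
Plan: Apply Duhamel's formula to split $u = u_{\mathrm{h}} + u_{\mathrm{D}}$ and control each term by a different Strichartz-type bound. Setting $\omega := \sqrt{-\Delta+1}$, the Duhamel representation of \eqref{KGE} reads
\begin{equation*}
u(t) = \underbrace{\cos(t\omega) f + \omega^{-1}\sin(t\omega) g}_{u_{\mathrm{h}}(t)} + \underbrace{\int_0^t \omega^{-1}\sin((t-s)\omega)\, F(s)\, ds}_{u_{\mathrm{D}}(t)}, \qquad F := -Vu.
\end{equation*}
For the homogeneous piece I would invoke the classical free Klein-Gordon Strichartz estimate of Brenner--Ginibre--Velo type, which gives $\|u_{\mathrm{h}}\|_{L_t^q H_r^\sigma} \lesssim \|f\|_{H^{1/2}} + \|g\|_{H^{-1/2}}$ throughout the admissible range \eqref{admissible0}--\eqref{admissible}; the parameter $\theta\in[0,1]$ interpolates between the wave-type behavior of $\omega$ at high frequency ($\theta=0$) and its Schr\"odinger-type behavior at low frequency ($\theta=1$), and the estimate is standard after Littlewood--Paley decomposition followed by stationary phase on each frequency piece.

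The heart of the matter is the weighted inhomogeneous Strichartz bound
\begin{equation}\label{planA}
\Big\|\int_0^t \omega^{-1}\sin((t-s)\omega)\, F(s)\, ds\Big\|_{L_t^q H_r^\sigma}
\lesssim \|V\|_{\mathcal{F}^p}^{1/2}\,\|F\|_{L_{x,t}^2(|V|^{-1})}.
\end{equation}
Granting \eqref{planA} and substituting $F = -Vu$, the identity $\|Vu\|_{L_{x,t}^2(|V|^{-1})} = \|u\|_{L_{x,t}^2(|V|)}$ combined with \eqref{WL2} of Theorem \ref{thm1} yields
\begin{equation*}
\|u_{\mathrm{D}}\|_{L_t^q H_r^\sigma}
\lesssim \|V\|_{\mathcal{F}^p}^{1/2}\cdot \|V\|_{\mathcal{F}^p}^{1/2}\big(\|f\|_{H^{1/2}}+\|g\|_{H^{-1/2}}\big)
= \|V\|_{\mathcal{F}^p}\big(\|f\|_{H^{1/2}}+\|g\|_{H^{-1/2}}\big),
\end{equation*}
and summing with the homogeneous bound produces the advertised factor $(1+\|V\|_{\mathcal{F}^p})$ in \eqref{Strichartz}.

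To establish \eqref{planA}, the Christ--Kiselev lemma, available because $q>2$, reduces the retarded estimate to its untruncated counterpart with $\int_{\mathbb{R}}$ in place of $\int_0^t$. Writing $\omega^{-1}\sin((t-s)\omega)$ as a linear combination of $\omega^{-1}e^{\pm i(t-s)\omega}$ and factoring $e^{i(t-s)\omega} = e^{it\omega}\cdot e^{-is\omega}$, the untruncated bound follows by a $TT^*$-composition through $L_x^2$: the outgoing factor $e^{it\omega}$ is absorbed by the free Strichartz $\|e^{it\omega}\varphi\|_{L_t^q H_r^\sigma}\lesssim \|\varphi\|_{H^{1/2}}$, while the incoming factor $s\mapsto e^{-is\omega}$ is controlled by the dual of the weighted $L^2$ smoothing estimate
\begin{equation*}
\|e^{it\omega}\varphi\|_{L_{x,t}^2(|V|)} \lesssim \|V\|_{\mathcal{F}^p}^{1/2}\|\varphi\|_{H^{1/2}},
\end{equation*}
which is precisely the specialization of \eqref{WL2} to the free Cauchy data $(\varphi,0)$, i.e., what the weighted $L^2$ machinery of the earlier sections supplies. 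The derivative loss from the $\omega^{-1}$ factor and the half-derivative shift on the right-hand side are absorbed by redefining the intermediate $L^2$ variable via $\omega^{-1/2}$. The main technical subtlety is precisely this bookkeeping: one must verify that the $TT^*$-composition lands in exactly $L_t^q H_r^\sigma$ with the gap \eqref{admissible} and admissibility \eqref{admissible0}. This is routine once the free Klein-Gordon Strichartz is set up uniformly across Littlewood--Paley pieces, and no new potential-dependent input beyond \eqref{WL2} is needed.
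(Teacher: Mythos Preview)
Your proposal is correct and follows essentially the same route as the paper: Duhamel decomposition, free Strichartz for the homogeneous piece, and for the Duhamel piece a combination of the weighted $L^2$ smoothing estimate \eqref{homo} (and its dual), the weighted bound \eqref{WL2} on $u$, and the Christ--Kiselev lemma (using $q>2$). The only difference is cosmetic: the paper proves \eqref{planA} by an explicit duality pairing with a test function $G$, swapping the $s$- and $t$-integrals, and applying H\"older in the weighted space, whereas you phrase the same computation as a $TT^*$ factorization through $L^2_x$; the ingredients and the bookkeeping of the $\omega^{-1}$ and $\langle\nabla\rangle^{1/2}$ shifts are identical.
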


\begin{rem}\label{rem}
The condition \eqref{admissible0} corresponds to the wave admissible pairs at $\theta=0$
and the equality in \eqref{admissible0} is the Schr\"odinger admissible pairs at $\theta=1$.
Note that the Klein-Gordon flow $e^{it\sqrt{1-\Delta}}$ behaves like the wave flow at high frequency and the Schr\"odinger flow at low frequency.
In general, the Strichartz estimates for the Klein-Gordon equation are more complicated by the different scaling of $\sqrt{1-\Delta}$
for low and high frequencies.
\end{rem}

We conclude with a short summary of earlier results.
The Strichartz estimates for the Klein-Gordon equation have been studied for decades.

In the free case $V\equiv 0$, Strichartz \cite{St} first established the following estimate
in connection with the Fourier restriction theory in harmonic analysis:
\begin{equation}\label{one-s}
\|u\|_{L^q(\mathbb{R}^{n+1})} \lesssim \|f\|_{H^{1/2}} + \|g\|_{H^{-1/2}}
\end{equation}
for $2(n+2)/n \le q \le 2(n+1)/(n-1)$.
Since then, there have been developments in extending \eqref{one-s} to mixed norm spaces $L_t^q(\mathbb{R}; L_x^r(\mathbb{R}^n))$
as follows (see e.g. \cite{B,ZZ} and references therein):
\begin{equation}\label{tre}
\|u\|_{L_t^q(\mathbb{R};H_r^\sigma(\mathbb{R}^n))} \lesssim \|f\|_{H^{1/2}} + \|g\|_{H^{-1/2}}
\end{equation}
under the same conditions \eqref{admissible0} and \eqref{admissible}.
Here the diagonal case $q=r$ with $\sigma=0$ entirely recovers \eqref{one-s}.

In the case of the potential perturbation, several works have recently treated \eqref{Strichartz}.
In \cite{D'AF} the potentials satisfy the decay assumption that $V(x)$ decays like $(|x|^{3/2-\varepsilon}+|x|^2)^{-1}$ at infinity
only with the non-endpoint ($q\neq2$) Schr\"odinger admissible pairs. See Remark \ref{rem}.
In \cite{D'A2} this is extended to the wave admissible pairs but with the stronger assumption \eqref{im} on the potential.
Compared with these previous results, our theorem improves not only the perturbation by the inverse-square potential $c/|x|^{-2}$
but also the pairs $(q,r)$ for which the estimate holds.

The rest of this paper is organized as follows. In Sections \ref{sec2} and \ref{sec3},
we obtain some weighted-$L^2$ and local smoothing estimates concerning the free Klein-Gordon flow $e^{it\sqrt{1-\Delta}}$,
which will be used in the next sections \ref{sec4} and \ref{sec5}
for the proof of Theorems \ref{thm1} and \ref{thm2}, respectively.

Throughout this paper, the letter $C$ stands for a positive constant which may be different
at each occurrence.
We also denote $A\lesssim B$ to mean $A\leq CB$
with unspecified constants $C>0$.

\section{Estimates for the free flow}\label{sec2}

In this section we obtain some estimates for the free Klein-Gordon flow $e^{it\sqrt{1-\Delta}}$
which will be used in the next sections for the proof of Theorems \ref{thm1} and \ref{thm2}.
From now on, we shall use the notation $\langle \nabla\rangle=\sqrt{1-\Delta}$.

\begin{prop}\label{KGWL2}
Let $n\geq3$ and $V\in \mathcal{F}^{p}$ for $p>(n-1)/2$. Then we have
\begin{equation}\label{homo}
\big\| e^{it\sqrt{1-\Delta}}f\big\|_{L_{x,t}^2 (|V|)} \lesssim \|V\|_{\mathcal{F}^{p}}^{1/2} \|\langle \nabla \rangle^{1/2}f\|_{L^2}
\end{equation}
and
\begin{equation}\label{inhomo}
\sup_{t\in \mathbb{R}}\bigg\|\langle \nabla\rangle^{-1/2} \int_{0}^{t} e^{i(t-s)\sqrt{1-\Delta}}F(\cdot,s)ds \bigg\|_{L_x^2} \lesssim \|V\|_{\mathcal{F}^{p}}^{1/2} \|F\|_{L_{x,t}^2(|V|^{-1})}.
\end{equation}
\end{prop}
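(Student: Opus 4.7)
The plan is to establish the homogeneous estimate \eqref{homo} first via a spectral decomposition that reduces matters to a weighted restriction estimate on spheres, and then to deduce the inhomogeneous estimate \eqref{inhomo} from \eqref{homo} via $TT^*$ duality combined with a simple truncation.

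For \eqref{homo}, I would apply Plancherel's theorem in $t$. The Fourier transform in $t$ of $e^{it\sqrt{1-\Delta}}f$ localizes $\hat f$ to the sphere $|\xi| = r := \sqrt{\tau^2-1}$, and changing variables between $\tau$ and $|\xi|$ yields
\[
\int_{\mathbb{R}} \big|e^{it\sqrt{1-\Delta}} f(x)\big|^2\, dt \;\sim\; \int_{1}^{\infty} \tau^2 r^{2(n-2)} \left|\int_{S^{n-1}} e^{ix\cdot r\omega}\, \hat f(r\omega)\, d\omega\right|^2 d\tau.
\]
Multiplying by $|V(x)|$ and integrating in $x$, the problem reduces, after using the scaling identity $\|V(\cdot/r)\|_{\mathcal{F}^p} = r^2 \|V\|_{\mathcal{F}^p}$, to the unit-sphere weighted restriction estimate
\[
\int |V(x)| \left|\int_{S^{n-1}} e^{ix\cdot\omega} h(\omega)\, d\omega\right|^2 dx \;\lesssim\; \|V\|_{\mathcal{F}^p}\, \|h\|^2_{L^2(S^{n-1})}, \qquad p > (n-1)/2,
\]
applied with $h(\omega) = \hat f(r\omega)$. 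Matching the resulting powers of $r$ against the RHS expression $\|\langle\nabla\rangle^{1/2}f\|_{L^2}^2 \sim \int_1^\infty \tau^2 r^{n-2}\|h\|^2_{L^2(S^{n-1})}\, d\tau$ then gives \eqref{homo}.

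For \eqref{inhomo}, I would use $TT^*$ duality. With $Tf = e^{it\sqrt{1-\Delta}} \langle\nabla\rangle^{-1/2} f$, estimate \eqref{homo} reads $\|T\|_{L^2_x \to L^2_{x,t}(|V|)} \lesssim \|V\|_{\mathcal{F}^p}^{1/2}$. A direct computation of the adjoint relative to the weighted inner product gives $T^*G = \langle\nabla\rangle^{-1/2}\int e^{-is\sqrt{1-\Delta}}(|V|G)(\cdot,s)\,ds$, and the substitution $F = |V|G$ (which converts $\|G\|_{L^2_{x,t}(|V|)}$ into $\|F\|_{L^2_{x,t}(|V|^{-1})}$) produces
\[
\left\|\langle\nabla\rangle^{-1/2} \int_{\mathbb{R}} e^{-is\sqrt{1-\Delta}} F(\cdot,s)\,ds\right\|_{L^2_x} \;\lesssim\; \|V\|_{\mathcal{F}^p}^{1/2}\, \|F\|_{L^2_{x,t}(|V|^{-1})}.
\]
Factoring the unitary $e^{it\sqrt{1-\Delta}}$ out of $\int_0^t e^{i(t-s)\sqrt{1-\Delta}} F(\cdot,s)\,ds$ and writing $\int_0^t = \int_{\mathbb{R}} \chi_{[0,t]}$, this together with the trivial bound $\|\chi_{[0,t]} F\|_{L^2(|V|^{-1})} \leq \|F\|_{L^2(|V|^{-1})}$ yields \eqref{inhomo}; the $\sup_t$ costs nothing since the resulting bound is $t$-independent.

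The principal obstacle is the weighted restriction estimate on the unit sphere in the sharp Fefferman--Phong range $p > (n-1)/2$: a naive H\"older-plus-Stein-Tomas argument reaches only $p \geq (n+1)/2$, so a genuine refinement (a weighted $L^2$-restriction theorem in Fefferman--Phong/Morrey classes) must be invoked. Once this ingredient is in hand, all remaining steps are routine scaling, Plancherel, and duality manipulations.
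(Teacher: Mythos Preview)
Your proposal is correct and follows essentially the same approach as the paper: Plancherel in $t$ reduces \eqref{homo} to the weighted $L^2$ restriction estimate on spheres (which the paper invokes from Chanillo--Sawyer and Chiarenza--Ruiz for $p>(n-1)/2$), and \eqref{inhomo} is then deduced by duality, truncation by $\chi_{[0,t]}$, and the $L^2$-isometry of $e^{it\sqrt{1-\Delta}}$. The only cosmetic difference is that the paper works directly with the rescaled restriction estimate on $\mathbb{S}^{n-1}_r$ rather than pulling back to the unit sphere via the scaling identity for $\|V\|_{\mathcal{F}^p}$.
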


\begin{proof}
First we show the estimate \eqref{homo}.
Using polar coordinates $\xi \rightarrow r\sigma$ and a change of variables $\sqrt{1+r^2} \rightarrow r$, we see
\begin{align*}
e^{it\sqrt{1-\Delta}}f(x) &=  \int_{\mathbb{R}^n} e^{ix\cdot\xi} e^{it\sqrt{1+|\xi|^{2}}} \widehat{f}(\xi)\,d\xi\\
&= \int_{0}^{\infty} \int_{\mathbb{S}_{r}^{n-1}} e^{ix\cdot r\sigma} e^{it\sqrt{1+r^{2}}} \widehat{f}(r\sigma) \,d\sigma_{r}dr\\
&=\int_{1}^{\infty} \int_{\mathbb{S}_{\sqrt{r^2-1}}^{n-1}} e^{ix\cdot\sqrt{r^2-1}\sigma} e^{itr}\widehat{f}(\sqrt{r^2-1}\sigma)
\frac{r}{\sqrt{r^2-1}} \,d\sigma_{\sqrt{r^2-1}}\,dr\\
&=\int_{-\infty}^\infty e^{itr}\chi_{(1,\infty)}(r) \frac{r}{\sqrt{r^2-1}} \big(\widehat{f}d\sigma_{\sqrt{r^2-1}}\big)^{\wedge}(-x)\,dr.
\end{align*}
By applying Plancherel's theorem in the $t$-variable and then using a change of variables $ \sqrt{r^2-1} \rightarrow r$ again,
\begin{align}\label{homopf1}
\big\| e^{it\sqrt{1-\Delta}}f \big\|_{L_{x,t}^{2}(|V|)}^{2} &=\int_{\mathbb{R}^n}\int_{1}^{\infty}
\bigg|\frac{r}{\sqrt{r^2-1}} \big(\widehat{f}d\sigma_{\sqrt{r^2-1}}\big)^{\wedge}(-x)\bigg|^{2}|V(x)|\,drdx\nonumber\\
&=\int_{0}^{\infty}  \int_{\mathbb{R}^n}\frac{\sqrt{1+r^2}}{r}\Big|\widehat{\widehat{f}d\sigma_{r}}(-x)\Big|^{2}|V(x)|dxdr\nonumber\\
&=\int_{0}^{\infty} \frac{\sqrt{1+r^2}}{r}\Big\| \widehat{\widehat{f}d\sigma_{r}}(-\cdot)\Big\|_{L_x^2(|V|)}^2dr.
\end{align}
Now we make use of the following weighted $L^2$ restriction estimate for the Fourier transform:
\begin{equation}\label{weightfr}
\big\|\widehat{fd\sigma}\big\|_{L^{2}(|V|)} \lesssim \|V\|_{\mathcal{F}^{p}}^{1/2}\|f\|_{L^{2}(\mathbb{S}^{n-1})}
\end{equation}
for $V\in \mathcal{F}^p$ with $p>(n-1)/2$, $n\geq3$, which can be found in \cite{CR,CS} (see also \cite{BBCRV,S}).
Indeed, applying the re-scaled estimate of \eqref{weightfr},
\begin{equation*}
\big\|\widehat{fd\sigma_{r}}\big\|_{L^{2}(|V|)} \lesssim r^{1/2}\|V\|_{\mathcal{F}^{p}}^{1/2}\|f\|_{L^{2}(\mathbb{S}_{r}^{n-1})},
\end{equation*}
to the right-hand side of \eqref{homopf1}, we get
\begin{align*}
\big\|e^{it\sqrt{1-\Delta}}f\big\|_{L_{x,t}^2(|V|)}^{2}
&\lesssim \int_{0}^{\infty} \sqrt{1+r^2} \|V\|_{\mathcal{F}^p}\int_{\mathbb{S}_r^{n-1}} |\widehat{f}(r\sigma)|^{2}d\sigma_{r}dr\\
&= \|V\|_{\mathcal{F}^p} \int_{0}^{\infty}\int_{\mathbb{S}_r^{n-1}}|(1+r^2)^{1/4} \widehat{f}(r\sigma)|^{2}d\sigma_r dr\\
&= \|V\|_{\mathcal{F}^p} \|\langle \nabla \rangle^{1/2}f\|_{L^2}^2
\end{align*}
as desired.

To show the second estimate \eqref{inhomo}, we first note that \eqref{homo} is equivalent to
\begin{equation}\label{eq2}
\bigg\|\langle \nabla \rangle^{-1/2} \int_{-\infty}^\infty e^{-is\sqrt{1-\Delta}}F(\cdot,s)ds\bigg\|_{L_x^2} \lesssim \|V\|_{\mathcal{F}^p}^{1/2}\|F\|_{L_{x,t}^2 (|V|^{-1})}
\end{equation}
by duality.
Substituting $\chi_{[0,t]}(s)F(\cdot,s)$ for $F(\cdot,s)$ and taking the supremum over $t$, we then get
\begin{equation*}
\sup_{t\in\mathbb{R}} \bigg\| \langle \nabla \rangle^{-1/2} \int_{0}^{t} e^{-is\sqrt{1-\Delta}}F(\cdot,s)ds\bigg\|_{L_x^2}
\lesssim \|V\|_{\mathcal{F}^{p}}^{1/2} \|F\|_{L_{x,t}^{2}(|V|^{-1})}.
\end{equation*}
Combining this and the fact that $e^{it\sqrt{1-\Delta}}$ is an isometry in $L^2$,
we obtain the desired estimate,
\begin{align*}
\sup_{t\in \mathbb{R}}\bigg\|\langle \nabla \rangle^{-1/2}\int_{0}^{t}e^{i(t-s)\sqrt{1-\Delta}}F(\cdot,s)\,ds\bigg\|_{L_x^2}
&= \sup_{t\in \mathbb{R}}\bigg\| \langle \nabla \rangle^{-1/2}\int_{0}^{t}e^{-is\sqrt{1-\Delta}}F(\cdot,s)\,ds\bigg\|_{L_x^2}\\
&\lesssim \|V\|_{\mathcal{F}^{p}}^{1/2} \|F\|_{L_{x,t}^2(|V|^{-1})}.
\end{align*}
\end{proof}

Next we obtain the following local smoothing estimate for the free flow
using Lemma \ref{tracethm} instead of \eqref{weightfr} in the arguments of the proof of \eqref{homo}.

\begin{prop}\label{LS-homo}
Let $n \geq 2$. Then we have
\begin{equation}\label{LShomo}
\sup_{x_0 \in \mathbb{R}^n ,R>0} \frac{1}{R} \int_{|x-x_0|<R} \int_{-\infty}^{\infty} \big| |\nabla|^{1/2} e^{it\sqrt{1-\Delta}} f \big|^{2}\,dtdx
\lesssim \| f \|^2_{H^{1/2}}.
\end{equation}
\end{prop}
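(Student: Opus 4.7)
The plan is to mimic the proof of \eqref{homo} step by step, replacing the Fefferman--Phong restriction estimate \eqref{weightfr} by the classical Agmon--H\"ormander trace theorem
\begin{equation*}
\sup_{x_0\in\mathbb{R}^n,\,R>0}\frac{1}{R}\int_{|x-x_0|<R}\big|(h\,d\sigma)^{\wedge}(x)\big|^2\,dx \lesssim \|h\|_{L^2(\mathbb{S}^{n-1})}^2,\qquad n\geq 2,
\end{equation*}
which the authors presumably state as Lemma \ref{tracethm}. This trace theorem plays the same role as \eqref{weightfr} did before, but with the weight $|V|$ replaced by (normalized) ball indicators, and it is valid for $n\geq 2$, matching the hypothesis of the proposition.

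Setting $g=|\nabla|^{1/2}f$ so that $|\nabla|^{1/2}e^{it\sqrt{1-\Delta}}f=e^{it\sqrt{1-\Delta}}g$, the polar-coordinate computation, Plancherel in $t$, and the change of variable $\sqrt{r^2-1}\to s$ already carried out in the proof of \eqref{homo} give the pointwise-in-$x$ identity
\begin{equation*}
\int_{\mathbb{R}}\big|e^{it\sqrt{1-\Delta}}g(x)\big|^2\,dt = \int_0^\infty \frac{\sqrt{1+s^2}}{s}\,\big|\widehat{\widehat{g}\,d\sigma_s}(-x)\big|^2\,ds.
\end{equation*}
Integrating over $B(x_0,R)$, dividing by $R$, applying Fubini, and then moving $\sup_{x_0,R}$ inside the $s$-integral reduces matters to a pointwise-in-$s$ bound. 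For each fixed $s>0$ the linear change of variables $y=-sx$ conjugates $\widehat{\widehat{g}\,d\sigma_s}(-x)$ to a unit-sphere quantity, and a routine tracking of Jacobians shows that the $s^{n-1}$ factor coming from $d\sigma_s=s^{n-1}d\sigma$ cancels against the $s^{-(n-1)}$ coming from the rescaled $L^2$-norm on the sphere. Hence the trace theorem yields the scale-invariant bound
\begin{equation*}
\sup_{x_0,R}\frac{1}{R}\int_{|x-x_0|<R}\big|\widehat{\widehat{g}\,d\sigma_s}(-x)\big|^2\,dx \lesssim \|\widehat{g}\|_{L^2(\mathbb{S}^{n-1}_s)}^2
\end{equation*}
with implicit constant independent of $s$.

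Finally, substituting $|\widehat{g}(\eta)|^2=|\eta||\widehat{f}(\eta)|^2=s|\widehat{f}(\eta)|^2$ on $\mathbb{S}^{n-1}_s$ and combining gives
\begin{equation*}
\int_0^\infty \sqrt{1+s^2}\int_{\mathbb{S}^{n-1}_s}|\widehat{f}(\eta)|^2\,d\sigma_s(\eta)\,ds = \int_{\mathbb{R}^n}\sqrt{1+|\xi|^2}|\widehat{f}(\xi)|^2\,d\xi = \|f\|_{H^{1/2}}^2,
\end{equation*}
which is exactly the right-hand side of \eqref{LShomo}. No serious obstacle is expected, since the argument is essentially a transcription of the proof of \eqref{homo} with the trace theorem replacing the Fefferman--Phong restriction estimate. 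The one delicate point is the scaling bookkeeping: the half-derivative $|\nabla|^{1/2}$ contributes precisely the $s$-weight needed so that, combined with the Klein--Gordon dispersion Jacobian $\sqrt{1+s^2}/s$, it produces the $\sqrt{1+s^2}$ weight that integrates against $d\sigma_s$ to recover exactly the $H^{1/2}$ norm, while the scale-invariance of the trace theorem on frequency spheres guarantees that no spurious powers of $s$ appear.
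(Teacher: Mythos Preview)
Your proposal is correct and follows essentially the same route as the paper. The only cosmetic difference is that the paper keeps the factor $|\xi|^{1/2}$ inside the polar-coordinate computation from the start (obtaining $\frac{r}{(r^2-1)^{1/4}}$ rather than $\frac{r}{\sqrt{r^2-1}}$ before the change of variables) and then applies Lemma~\ref{tracethm} directly on $\mathbb{S}_r^{n-1}$ with $k=1$, whereas you set $g=|\nabla|^{1/2}f$, reuse the computation \eqref{homopf1} verbatim, and verify the scale-invariance of the trace theorem by an explicit rescaling to the unit sphere; the two bookkeepings are equivalent.
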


\begin{proof}
Using polar coordinates $\xi \rightarrow r\sigma$ and a change of variables $\sqrt{1+r^2} \rightarrow r$ as before, we see
\begin{align*}
|\nabla|^{\frac12} e^{it\sqrt{1-\Delta}}f(x)
&=\int_{0}^{\infty} e^{it\sqrt{1+r^{2}}}r^{\frac12} \int_{\mathbb{S}_{r}^{n-1}} e^{ix\cdot r\sigma}\widehat{f}(r\sigma)d\sigma_{r} dr\\
&=\int_{-\infty}^\infty e^{itr}\chi_{(1,\infty)}(r) \frac{r}{(r^2-1)^{\frac14}} \big(\widehat{f}d\sigma_{\sqrt{r^2-1}}\big)^{\wedge}(-x)dr.
\end{align*}
We then apply Plancherel's theorem in the $t$-variable and use a change of variables $\sqrt{r^2-1}\rightarrow r$ to obtain
\begin{align}\label{eq3}
\nonumber\sup_{x_0 ,R} \frac{1}{R} \int_{|x-x_0|<R} &\int_{-\infty}^{\infty}  \big| |\nabla|^{1/2} e^{it\sqrt{1-\Delta}} f \big|^{2}dtdx\\
&=\sup_{x_0 ,R}\frac{1}{R} \int_{|x-x_0|<R}\int_{1}^{\infty}\bigg|\frac{r}{(r^2-1)^{1/4}} \nonumber\big(\widehat{f}d\sigma_{\sqrt{r^2-1}}\big)^{\wedge}(-x)\bigg|^{2}drdx\\
&=\sup_{x_0,R} \frac{1}{R} \int_{|x-x_0|<R}\int_{0}^{\infty} \sqrt{1+r^2} \Big| \widehat{\widehat{f}d\sigma_r}(-x)\Big|^{2}drdx.
\end{align}
At this point we use an elementary result for Fourier transforms of $L^2$ densities
(see Theorem 2.1 in \cite{AH}):
\begin{lem}\label{tracethm}
Let $K$ be a compact subset of a $C^{1}$ manifold $M$ of codimension $k$ in $\mathbb{R}^n$,
with the Euclidean surface area $dS$.
If the Fourier transform $\widehat{g}$ of a tempered distribution $g\in\mathcal{S}'(\mathbb{R}^n)$ is a square integrable density $\widehat{g}dS$
with support in $K$, then
\begin{equation}\label{den}
\int_{|x|<R} |g(x)|^{2}dx \leq CR^{k} \int |\widehat{g}(\xi)|^{2}dS
\end{equation}
where $\xi=(\xi_1,\cdots, \xi_{n-k}) \in \mathbb{R}^{n-k}$ and the constant $C$ is independent of $g$ and $R>0$.	
\end{lem}

Indeed, by changing the order of integration in the right side of \eqref{eq3} and then applying \eqref{den} with $k=1$ and $g=\widehat{\widehat{f}d\sigma_r}$,
it follows that
\begin{align*}\label{eq3}
\sup_{x_0 ,R} \frac{1}{R} \int_{|x-x_0|<R}\int_{-\infty}^{\infty}\big| |\nabla|^{\frac12} e^{it\sqrt{1-\Delta}} f \big|^{2}dtdx
&\lesssim \sup_{x_0,R}\int_{0}^{\infty} \sqrt{1+r^2} \int_{\mathbb{S}_{r}^{n-1}} |\widehat{f}(r\sigma)|^2 d\sigma_{r}dr\\
&\lesssim \int_{0}^{\infty}  \int_{\mathbb{S}_{r}^{n-1}} |(1+r^2)^{1/4}\widehat{f}(r\sigma)|^{2}d\sigma_{r}dr \\
&=\|f\|^2_{H^{1/2}}
\end{align*}
as desired.
\end{proof}

\section{Inhomogeneous estimates}\label{sec3}
In addition to the estimates in the previous section, we need to obtain the corresponding estimates for the inhomogeneous Klein-Gordon equation
with zero initial data,
\begin{equation}
	\begin{cases}\label{inhomoKGE}
\partial_t^2 u - \Delta u + u = F(x,t),\\
u(x,0)=0,\\
\partial_t u(x,0)=0,
	\end{cases}
\end{equation}
as follows:
\begin{prop}\label{LS-inhomo}
Let $n \geq 3$ and $V \in \mathcal{F}^{p}$ for $p>(n-1)/2$. If $u$ is a solution of \eqref{inhomoKGE}, then
\begin{equation}\label{WLinhomo}
\| u \|_{L_{x,t}^{2}(|V|)} \lesssim \| V \|_{\mathcal{F}^{p}} \| F \|_{L_{x,t}^2(|V|^{-1})}
\end{equation}
and
\begin{equation}\label{LSinhomo}
\sup_{x_{0} \in \mathbb{R}^n,R>0} \frac{1}{R} \int_{|x-x_0|<R}\int_{-\infty}^{\infty} \big||\nabla|^{1/2} \, u\big|^2 \,dtdx \lesssim \|V\|_{\mathcal{F}^{p}}\| F \|_{L_{x,t}^2(|V|^{-1})}^2.
\end{equation}
\end{prop}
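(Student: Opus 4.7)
The proofs of \eqref{WLinhomo} and \eqref{LSinhomo} follow a common scheme: combine the homogeneous bound from Section~\ref{sec2} with its dual via a $TT^{*}$-type argument, first for the free propagator on the full line, and then upgrade to the retarded Duhamel integral.

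Starting from Duhamel and splitting the sine into exponentials,
\begin{equation*}
u(\cdot,t)=\frac{1}{2i\langle\nabla\rangle}\bigl(e^{it\langle\nabla\rangle}v^{+}(t)-e^{-it\langle\nabla\rangle}v^{-}(t)\bigr),\qquad v^{\pm}(t)=\int_{0}^{t}e^{\mp is\langle\nabla\rangle}F(\cdot,s)\,ds,
\end{equation*}
I would first handle the full-line variant $\tilde u(\cdot,t)=\int_{-\infty}^{\infty}\langle\nabla\rangle^{-1}\sin((t-s)\langle\nabla\rangle)F(\cdot,s)\,ds$, in which $v^{\pm}$ reduces to a fixed $L^{2}_{x}$-function $\phi^{\pm}=\int e^{\mp is\langle\nabla\rangle}F(\cdot,s)\,ds$. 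Applying \eqref{homo} with $f=\langle\nabla\rangle^{-1}\phi^{\pm}$ yields
\begin{equation*}
\bigl\|e^{it\langle\nabla\rangle}\langle\nabla\rangle^{-1}\phi^{\pm}\bigr\|_{L^{2}(|V|)}\lesssim \|V\|_{\mathcal{F}^{p}}^{1/2}\bigl\|\langle\nabla\rangle^{-1/2}\phi^{\pm}\bigr\|_{L^{2}},
\end{equation*}
and the dual estimate \eqref{eq2} bounds the right-hand side by $\|V\|_{\mathcal{F}^{p}}^{1/2}\|F\|_{L^{2}(|V|^{-1})}$, producing the full-line version of \eqref{WLinhomo} with constant $\|V\|_{\mathcal{F}^{p}}$. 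Replacing \eqref{homo} by Proposition~\ref{LS-homo} in exactly the same composition produces the full-line version of \eqref{LSinhomo}, the factor $\langle\nabla\rangle^{-1}$ from Duhamel being precisely what is needed to convert $|\nabla|^{1/2}$ against the $H^{1/2}$-loss in Proposition~\ref{LS-homo}.

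To transfer these bounds to the retarded $\int_{0}^{t}$ integral I would invoke the Kato--Yajima smoothing principle. With the auxiliary operator $A=|V|^{1/2}\langle\nabla\rangle^{-1/2}$, the homogeneous bound \eqref{homo} is exactly the statement that $A$ is $\langle\nabla\rangle$-smooth with constant $\lesssim\|V\|_{\mathcal{F}^{p}}^{1/2}$, and the Kato--Yajima theorem then yields
\begin{equation*}
\Bigl\|A\int_{0}^{t}e^{-i(t-s)\langle\nabla\rangle}A^{*}G(s)\,ds\Bigr\|_{L^{2}_{x,t}}\lesssim \|V\|_{\mathcal{F}^{p}}\|G\|_{L^{2}_{x,t}},
\end{equation*}
which, after relabelling $G=|V|^{1/2}F$, is precisely \eqref{WLinhomo}. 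The estimate \eqref{LSinhomo} follows by the same device, using that the Morrey-type norm on its left-hand side is controlled by a weighted $L^{2}$-norm of the kind discussed in the introduction, so that the Kato framework still applies.

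The principal obstacle is this last step, the passage from the full-line operator to the retarded one: at the $L^{2}_{t}\to L^{2}_{t}$ endpoint the Christ--Kiselev lemma is unavailable, and one really needs the Kato-smoothing machinery (equivalently, uniform resolvent estimates for $\langle\nabla\rangle^{2}-\tau^{2}$). The uniform-in-$t$ dual bound \eqref{inhomo} already established in Proposition~\ref{KGWL2} is precisely the input that makes this step go through.
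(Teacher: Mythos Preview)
Your treatment of \eqref{WLinhomo} is correct and amounts to a repackaging of the paper's argument. The paper decomposes $u=\tilde u+R$, where $\tilde u$ is defined through the resolvent $(-\Delta+1-(\tau+i\varepsilon)^{2})^{-1}$ and $R$ is a homogeneous correction; it then applies the uniform resolvent bound \eqref{WL2-helm} to $\tilde u$ after Plancherel in $t$, and \eqref{homo}, \eqref{eq2} to $R$. Since $\langle\nabla\rangle$-smoothness of $A=|V|^{1/2}\langle\nabla\rangle^{-1/2}$ is equivalent to the resolvent bound \eqref{WL2-helm}, your Kato--Yajima formulation and the paper's are the same theorem in different clothes. (Minor slip: the relabelling should be $G=|V|^{-1/2}F$, so that $\|G\|_{L^{2}_{x,t}}=\|F\|_{L^{2}_{x,t}(|V|^{-1})}$.)

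The genuine gap is in \eqref{LSinhomo}. Your justification that ``the Morrey-type norm on its left-hand side is controlled by a weighted $L^{2}$-norm of the kind discussed in the introduction'' has the inequality backwards: the introduction proves $\|\rho|x|^{-1/2}v\|_{L^{2}}^{2}\lesssim\sup_{R}R^{-1}\int_{|x|<R}|v|^{2}$, not the reverse. The Morrey norm is a \emph{supremum} of weighted $L^{2}$ norms with weights $R^{-1}\chi_{B_{R}(x_{0})}$, and is not dominated by any single weight of the admissible type. You are therefore in an \emph{asymmetric} situation: the output operator $B_{x_{0},R}=R^{-1/2}\chi_{B_{R}(x_{0})}|\nabla|^{1/2}\langle\nabla\rangle^{-1/2}$ is different from the input operator $A=|V|^{1/2}\langle\nabla\rangle^{-1/2}$. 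The symmetric Kato $TT^{*}$ theorem you invoke does not deliver the retarded bound
\[
\Bigl\|B\int_{0}^{t}e^{-i(t-s)\langle\nabla\rangle}A^{*}G(s)\,ds\Bigr\|_{L^{2}_{x,t}}\lesssim\|G\|_{L^{2}_{x,t}}
\]
from the mere fact that $A$ and $B$ are each $\langle\nabla\rangle$-smooth; what is needed is the \emph{mixed} uniform resolvent bound $\sup_{z}\|B(-\Delta-z)^{-1}A^{*}\|<\infty$. That is exactly the content of \eqref{LS-helm} in Lemma~\ref{Helm}, taken from Ruiz--Vega \cite{RV}, and it is the ingredient your sketch is missing. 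Neither Proposition~\ref{LS-homo} nor the dual bound \eqref{inhomo} supplies it. The paper's proof inserts \eqref{LS-helm} into the resolvent representation of $\tilde u$ after Plancherel in $t$, and handles the homogeneous remainder $R$ with \eqref{LShomo} and \eqref{eq2}.
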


The main ingredient in the proof of this proposition is the following estimates for the resolvent $\mathcal{R}(z):=(-\Delta-z)^{-1}$
of the Laplacian.
The first estimate \eqref{WL2-helm} can be found in \cite{CS,CR} (see also \cite{S}),
and see Theorem 2 in \cite{RV} for the second estimate \eqref{LS-helm}.

\begin{lem}\label{Helm}		
Let $n\geq 3$ and $z\in\mathbb{C}$ with $\textrm{Im}\,z\neq0$. If $V\in\mathcal{F}^{p}$ for $p> (n-1)/2$, then
\begin{equation}\label{WL2-helm}
\|\mathcal{R}(z)f\|_{L^2(|V|)}\leq C\|V\|_{\mathcal{F}^{p}} \|f\|_{L^2(|V|^{-1})}
\end{equation}
and
\begin{equation}\label{LS-helm}
\sup_{x_{0} \in \mathbb{R}^n,R>0} \frac{1}{R} \int_{|x-x_0|<R} \big||\nabla|^{1/2}\mathcal{R}(z)f \big|^2 dx \leq C \|V\|_{\mathcal{F}^{p}} \|f\|_{L^2(|V|^{-1})}^2
\end{equation}
with constants $C>0$ independent of $z$.
\end{lem}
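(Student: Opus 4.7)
The plan is to invoke the references cited immediately after the lemma statement, and to sketch the common strategy: reduce each resolvent estimate by Fourier analysis to an extension-type estimate for the sphere, in close parallel with the arguments already carried out for the free Klein-Gordon flow in Propositions \ref{KGWL2} and \ref{LS-homo}. The starting point is the Fourier representation
$$\mathcal{R}(z)f(x) = \int_{\mathbb{R}^n} \frac{\widehat{f}(\xi)}{|\xi|^2 - z}\, e^{ix\cdot\xi}\, d\xi = \int_0^\infty \frac{r^{n-1}}{r^2 - z}\, \big(\widehat{f}\, d\sigma_r\big)^{\wedge}(-x)\, dr,$$
which expresses $\mathcal{R}(z)$ as a radial superposition of spherical extension operators with a scalar weight $r^{n-1}/(r^2-z)$.

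For \eqref{WL2-helm}, I would pair $\mathcal{R}(z)f$ with a test function of the form $Vg$ with $g\in L^2(|V|)$, apply Cauchy-Schwarz on the angular integral at each radius $r$, and then invoke the re-scaled form of the weighted restriction estimate \eqref{weightfr} on each sphere $\mathbb{S}^{n-1}_r$. This produces an upper bound of the form
$$|\langle \mathcal{R}(z)f,\, Vg\rangle| \lesssim \|V\|_{\mathcal{F}^p} \int_0^\infty \frac{r^{n}}{|r^2-z|}\, \|\widehat{f}\|_{L^2(\mathbb{S}^{n-1}_r)}\, \|\widehat{g}\|_{L^2(\mathbb{S}^{n-1}_r)}\, dr,$$
and the remaining uniform-in-$z$ control of this radial integral, majorizing it by $\|f\|_{L^2(|V|^{-1})}\|g\|_{L^2(|V|^{-1})}$, is the content of \cite{CR,CS}. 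For \eqref{LS-helm}, the weighted restriction estimate is replaced by the Agmon-Hörmander trace inequality of Lemma \ref{tracethm} with codimension $k=1$, exactly as in the passage from Proposition \ref{KGWL2} to Proposition \ref{LS-homo}: the local $L^2$-mass of $(\widehat{f}\,d\sigma_r)^{\wedge}$ over a ball of radius $R$ is controlled by $R$ times the surface $L^2$-norm of $\widehat{f}$. The half-derivative $|\nabla|^{1/2}$ contributes an extra $r^{1/2}$ factor in the radial integrand, and the resulting scalar integral is the one treated in \cite{RV}.

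The main technical obstacle in both parts is uniformity of the radial kernel $(r^2-z)^{-1}$ as $\mathrm{Im}(z)\to 0$: the singularity at $r^2=z$ cannot be dominated pointwise, and one needs a limiting-absorption argument, a contour-deformation or Plemelj-type splitting, or equivalently an $L^2$ bound for a Hilbert-transform-type operator on the radial line that exploits the $L^2$ integrability of the angular trace norms. The key feature of the Fefferman-Phong setting is that the surface inputs \eqref{weightfr} and Lemma \ref{tracethm} already carry the sharp dependence on $\|V\|_{\mathcal{F}^p}$, so the uniformity reduction becomes a purely scalar harmonic-analysis statement, which is precisely what \cite{CR,CS} and \cite{RV} supply. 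From the perspective of the present paper, our role in establishing Lemma \ref{Helm} is therefore to cite these references and to observe that the geometric reduction mirrors the proofs of Propositions \ref{KGWL2} and \ref{LS-homo}.
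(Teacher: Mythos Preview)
Your proposal is correct and matches the paper's treatment: the paper does not prove Lemma~\ref{Helm} at all but simply cites \cite{CS,CR} for \eqref{WL2-helm} and \cite{RV} (Theorem~2) for \eqref{LS-helm}, exactly as you conclude in your final sentence. Your additional sketch of the spherical-decomposition strategy and the parallel with Propositions~\ref{KGWL2} and~\ref{LS-homo} goes beyond what the paper provides, but is a faithful outline of how those cited proofs proceed; one small slip is that the dual bound should read $\|f\|_{L^2(|V|^{-1})}\|g\|_{L^2(|V|)}$ rather than $\|g\|_{L^2(|V|^{-1})}$.
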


\subsection{Proof of \eqref{WLinhomo}}
We first decompose the solution $u$ of \eqref{inhomoKGE} as
\begin{equation}\label{decom}
u(x,t)=\tilde{u}(x,t)+R(x,t),
\end{equation}
where
\begin{equation*}
\tilde{u}(x,t)=\lim_{\varepsilon \rightarrow 0^+} \int_{-\infty}^{\infty}\int_{\mathbb{R}^n} e^{ix\cdot \xi + it\tau} \frac {\widehat{F}(\xi,\tau)}{1+|\xi|^2 - (\tau+i\varepsilon)^2}\,d\xi d\tau
\end{equation*}
and
\begin{align*}
R(x,t)=&-\cos{(t\sqrt{1-\Delta})} \int_{-\infty}^{\infty} \frac{\sin{(t\sqrt{1-\Delta})}}{\sqrt{1-\Delta}}\big(\chi_{[0,\infty)}(t)F(\cdot,t)\big)dt \\
& -i \frac{\sin{(t\sqrt{1-\Delta})}}{\sqrt{1-\Delta}}\int_{-\infty}^{\infty}\cos{\big(t\sqrt{1-\Delta}\big)}\big(\chi_{[0,\infty)}(t) F(\cdot,t)\big) dt .
\end{align*}
This decomposition enables us to control the solution $u$ by dividing it into two parts.
We handle the main part $\widetilde{u}$ appealing to the resolvent estimate in Lemma \ref{Helm}
while applying the weighted $L^2$ estimates for the Klein-Gordon flow $e^{it\sqrt{1-\Delta}}$ to the remainder part $R$.
Assuming for the moment this decomposition, we will show
\begin{equation}\label{md1}
\| \tilde{u} \|_{L_{x,t}^{2}(|V|)} \lesssim \| V \|_{\mathcal{F}^{p}} \| F \|_{L_{x,t}^2(|V|^{-1})}
\end{equation}
and
\begin{equation}\label{rd1}
\| R \|_{L_{x,t}^{2}(|V|)} \lesssim \| V \|_{\mathcal{F}^{p}} \| F \|_{L_{x,t}^2(|V|^{-1})}
\end{equation}
which implies immediately the desired estimate \eqref{WLinhomo}.
The second estimate \eqref{rd1} follows easily by applying the weighted $L^2$ estimates \eqref{homo} and \eqref{eq2}.
On the other hand, for the first estimate \eqref{md1} we use Plancherel's theorem in the $t$-variable and then change the order of integration to obtain
\begin{align}\label{md1-1}
\nonumber\| \tilde{u} \|_{L_{x,t}^{2}(|V|)}^2
&\lesssim \lim_{\varepsilon \rightarrow 0^+} \bigg\| \int_{-\infty}^{\infty}\int_{\mathbb{R}^n} e^{ix\cdot \xi + it\tau} \frac {\widehat{F}(\xi,\tau)} {1+|\xi|^{2} - (\tau+i\epsilon)^{2}} d\xi d\tau\bigg\|_{L_{x,t}^2 (|V|)}^2\\
&=\lim_{\varepsilon \rightarrow 0^+} \int_{-\infty}^{\infty} \int_{\mathbb{R}^n}\bigg|\int_{\mathbb{R}^n} e^{ix\cdot\xi}\frac{\widehat{F}(\xi,\tau)}{1+|\xi|^2 - (\tau+i\epsilon)^2 }d\xi\bigg|^2 |V(x)|dx d\tau .
\end{align}
Applying the resolvent estimate \eqref{WL2-helm} and Plancherel's theorem in the $\tau$-variable to \eqref{md1-1}, we have
\begin{align*}
\| \tilde{u} \|_{L_{x,t}^{2}(|V|)}^2
&\lesssim \int_{-\infty}^{\infty} \| V \|_{\mathcal{F}^p}^2 \int_{\mathbb{R}^n} |\widehat{F(x,\cdot)}(\tau)|^{2}|V(x)|^{-1}dxd\tau\\
&=\| V \|_{\mathcal{F}^p}^2 \int_{\mathbb{R}^n}\int_{-\infty}^{\infty}  |F(x,t)|^2 |V(x)|^{-1}dtdx\\
&=\| V \|_{\mathcal{F}^p}^2 \| F \| _{L_{x,t}^{2}(|V|^{-1})}^{2}
\end{align*}
as desired.

Now it remains to prove \eqref{decom}.
Since $\tilde{u}=u-R$ is a solution to the inhomogeneous equation via the space-time Fourier transform,
the remainder term $R$ is the solution of the homogeneous problem:
\begin{equation*}
\begin{cases}
\partial_t^2 R - \Delta R + R = 0,\\
R(x,0)=-\tilde{u}(x,0),\\
\partial_t R(x,0)=-\partial_t \tilde{u}(x,0)
\end{cases}
\end{equation*}
whose solution is given by
\begin{equation}\label{090}
R(x,t)=\cos{(t\sqrt{1-\Delta})}\big(-\tilde{u}(\cdot,0)\big)
+\frac{\sin{(t\sqrt{1-\Delta})}}{\sqrt{1-\Delta}}\big(-\partial_t \tilde{u}(\cdot,0)\big).
\end{equation}
To get $R$ explicitly, we compute
\begin{align*}
\tilde{u}(x,0)&=\lim_{\varepsilon \rightarrow 0^+}\int_{-\infty}^{\infty}\int_{\mathbb{R}^n} e^{ix\cdot \xi} \frac {\widehat{F}(\xi,\tau)}{1+|\xi|^2 - (\tau+i\varepsilon)^2}\,d\xi d\tau \\
&=\lim_{\varepsilon \rightarrow 0^+} \int_{-\infty}^{\infty}\int_{\mathbb{R}^{n}} e^{ix\cdot\xi}
\int_{-\infty}^{\infty} \frac{e^{-it\tau}}{1+|\xi|^2 - (\tau+i\varepsilon)^2} \widehat{F(\cdot,t)}(\xi) \,dtd\xi d\tau\\
&=\int_{\mathbb{R}^n}\int_{-\infty}^{\infty} e^{ix\cdot \xi} \widehat{F(\cdot,t)}(\xi)
\bigg(\lim_{\varepsilon \rightarrow 0^+}\int_{-\infty}^{\infty} \frac{e^{-it\tau}}{1+|\xi|^2-(\tau+i\varepsilon)^2}\,d\tau\bigg) dt d\xi
\end{align*}
and note that
\begin{align*}
&\lim_{\varepsilon \rightarrow 0^+}\int_{-\infty}^{\infty} \frac{e^{-it\tau}}{1+|\xi|^2-(\tau+i\varepsilon)^2}\,d\tau\\
&=\frac{1}{2\sqrt{1+|\xi|^2}} \Bigg\{ \lim_{\varepsilon \rightarrow 0^+}\int_{-\infty}^{\infty} \frac{e^{-it\tau}d\tau}{\sqrt{1+|\xi|^2}+\tau+i\varepsilon}
+ \lim_{\varepsilon \rightarrow 0^+}\int_{-\infty}^{\infty} \frac{e^{-it\tau}d\tau}{\sqrt{1+|\xi|^2}-(\tau+i\varepsilon)} \Bigg\}\\
&=\frac{1}{2\sqrt{1+|\xi|^2}} \Big\{ e^{it\sqrt{1+|\xi|^2}}(-i\chi_{[0,\infty)}(t))-e^{-it\sqrt{1+|\xi|^2}}(-i\chi_{[0,\infty)}(t)) \Big\}\\
&=\frac{\sin (t\sqrt{1+|\xi|^2})}{\sqrt{1+|\xi|^2}}\chi_{[0,\infty)}(t)
\end{align*}
(see \cite{So}, p. 30).
Therefore, we get
\begin{align}\label{091}
\nonumber\tilde{u}(x,0)
&=\int_{\mathbb{R}^n}\int_{-\infty}^{\infty} e^{ix\cdot\xi}\frac{\sin{(t\sqrt{1+|\xi|^2})}}{\sqrt{1+|\xi|^2}} \widehat{F(\cdot,t)}(\xi)\chi_{[0,\infty)}(t)dt d\xi\\
&=\int_{-\infty}^{\infty} \frac{\sin{(t\sqrt{1-\Delta})}}{\sqrt{1-\Delta}} \big(\chi_{[0,\infty)}(t) F(\cdot,t)\big)dt.
\end{align}
Similar calculations give
\begin{equation*}
\partial_t \tilde{u}(x,0)
=i\int_{-\infty}^{\infty}\cos{(t\sqrt{1-\Delta})}\big(\chi_{[0,\infty)}(t) F(\cdot,t)\big)dt.
\end{equation*}
Inserting this and \eqref{091} into \eqref{090}, we get the decomposition \eqref{decom}.

\subsection{Proof of \eqref{LSinhomo}}
Next we prove \eqref{LSinhomo}.
By the decomposition \eqref{decom}, it is enough to show that both $\tilde{u}$ and $R$ satisfy the estimate \eqref{LSinhomo}.
For $R$, it is easily shown by applying \eqref{LShomo} and \eqref{eq2}.
On the other hand, for $\tilde{u}$ we use Plancherel's theorem in the $t$-variable and then change the order of integration to get
\begin{align}\label{md2}
&\sup_{x_{0},R} \frac{1}{R} \int_{|x-x_0|<R} \int_{-\infty}^{\infty} \big||\nabla|^{1/2} \tilde{u}(x,t)\big|^{2}dtdx \nonumber\\
&=\sup_{x_{0},R} \frac{1}{R} \int_{|x-x_0|<R}\int_{-\infty}^{\infty} \bigg|\int_{-\infty}^{\infty}\int_{\mathbb{R}^n} e^{ix\cdot \xi + it\tau} \frac {|\xi|^{1/2}\widehat{F}(\xi,\tau)} {1+|\xi|^{2} - (\tau+i\epsilon)^{2}} \,d\xi d\tau\bigg|^2 dtdx\nonumber\\
&\lesssim \int_{-\infty}^{\infty}\, \sup_{x_{0},R} \frac{1}{R} \int_{|x-x_0|<R}\bigg|\int_{\mathbb{R}^n} e^{ix\cdot \xi} \frac {|\xi|^{1/2}\widehat{F}(\xi,\tau)} {1+|\xi|^{2} - (\tau+i\epsilon)^{2}} \,d\xi \bigg|^{2} dx d\tau.
\end{align}
Using \eqref{LS-helm} and then applying Plancherel's theorem in the $\tau$-variable again, the right-hand side of \eqref{md2} is bounded by
\begin{align*}
\| V \|_{\mathcal{F}^p} \int_{-\infty}^{\infty}  \int_{\mathbb{R}^{n}} |\widehat{F(x,\cdot)}(\tau)|^2 |V(x)|^{-1}dxd\tau
&=\|V\|_{\mathcal{F}^p} \int_{\mathbb{R}^n}\int_{-\infty}^{\infty}  |F(x,t)|^{2} |V(x)|^{-1}dtdx \\
&=\| V \|_{\mathcal{F}^p} \|F\|_{L_{x,t}^2(|V|^{-1})}^{2}.
\end{align*}
Hence we get
\begin{equation*}
\sup_{x_{0},R} \frac{1}{R} \int_{|x-x_0|<R} \int_{-\infty}^{\infty} \big||\nabla|^{\frac{1}{2}} \tilde{u}(x,t)\big|^{2}dtdx
\lesssim\| V \|_{\mathcal{F}^p} \|F\|_{L_{x,t}^2(|V|^{-1})}^{2}
\end{equation*}
as desired.

\section{Proof of Theorem \ref{thm1}}\label{sec4}
This section is devoted to proving Theorem \ref{thm1}.
Let us first define the space $X$ and the operator $\mathscr{S}$ by
$$X = \{ F: \| F \|_{L_{x,t}^2 (|V|)} < \infty\}$$
and
$$\mathscr{S}F = \int_{0}^{t}\frac{\sin((t-s)\sqrt{1-\Delta})}{\sqrt{1-\Delta}}\big(V(\cdot)F(\cdot,s)\big)ds.$$
We then consider the potential term in \eqref{KGE} as a source term and thus write the solution of \eqref{KGE} as
the sum of the solution to the free Klein-Gordon equation plus a Duhamel term, as follows:
\begin{equation}\label{KGSOL}
u=\cos(t\sqrt{1-\Delta}) f + \frac {\sin(t\sqrt{1-\Delta})}{\sqrt{1-\Delta}} g + \mathscr{S}u.
\end{equation}

For the existence of a unique solution in the space $X$, we will show that the first two terms in the right-hand side of \eqref{KGSOL} are in $X$,
provided $(f,g)\in H^{1/2}\times H^{-1/2}$,
and that the operator $\mathscr{S}$ is a contraction in $X$ if $\| V \|_{\mathcal{F}^p}$ is small enough.
The first part follows immediately from applying the estimate \eqref{homo}:
\begin{align}\label{WL2homo}
\bigg\| \cos(t\sqrt{1-\Delta}) f + \frac {\sin(t\sqrt{1-\Delta})}{\sqrt{1-\Delta}} g \bigg\|_{L_{x,t}^2(|V|)}
\lesssim \| V \|_{\mathcal{F}^p}^{1/2}\big(\| f \|_{H^{1/2}} + \| g \|_{H^{-1/2}}\big).
\end{align}
For the second part, by the estimate \eqref{WLinhomo} we see that
\begin{align}\label{WL2inhomo}
\|\mathscr{S}F \|_{L_{x,t}^{2}(|V|)}
\nonumber&\lesssim \| V \|_{\mathcal{F}^p} \| VF \|_{L_{x,t}^2(|V|^{-1})}\\
\nonumber&\lesssim \| V \|_{\mathcal{F}^p} \| F\|_{L_{x,t}^2(|V|)}\\
&\leq\frac12\| F\|_{L_{x,t}^2(|V|)}
\end{align}
for $F\in X$, and thus the operator $\mathscr{S}$ is a contraction in $X$
since we are assuming that $\| V \|_{\mathcal{F}^p}$ is small enough.

Next, we show \eqref{cont2}. First note that
\begin{equation*}
\sup_{t\in\mathbb{R}}\bigg\| \cos(t\sqrt{1-\Delta})f + \frac{\sin(t\sqrt{1-\Delta})}{\sqrt{1-\Delta}}g\bigg\|_{H^{1/2}} \\
\lesssim \| \langle  \nabla \rangle^{1/2} f \|_{L^2} +\| \langle \nabla \rangle^{-1/2} g \|_{L^2}
\end{equation*}
by Plancherel's theorem,
and applying \eqref{inhomo} with $F=Vu$ gives
\begin{equation*}
\sup_{t\in\mathbb{R}}\| \mathscr{S}u \|_{H^{1/2}}
\lesssim \| V \|_{\mathcal{F}^p}^{1/2} \| u\|_{L_{x,t}^2(|V|)}.
\end{equation*}
It then follows easily that $u\in C([0,\infty);H^{1/2}(\mathbb{R}^{n}))$.
Similarly as above, to show $u_t\in C([0,\infty);H^{-1/2}(\mathbb{R}^{n}))$,
we first calculate
\begin{equation*}
\partial_t \bigg(\cos(t\sqrt{1-\Delta}) f + \frac{\sin(t\sqrt{1-\Delta})}{\sqrt{1-\Delta}} g \bigg)
= -\langle  \nabla\rangle \sin(t\sqrt{1-\Delta}) f + \cos(t\sqrt{1-\Delta}) g
\end{equation*}
and
\begin{equation*}
\frac{d}{dt} \int_{0}^{t} \frac{\sin((t-s)\sqrt{1-\Delta})} {\sqrt{1-\Delta}}F(\cdot,s)ds = \int_{0}^{t}\cos((t-s)\sqrt{1-\Delta})F(\cdot,s)ds.
\end{equation*}
Then we see
\begin{equation*}
\sup_{t\in\mathbb{R}}\bigg \|\partial_t \bigg(\cos(t\sqrt{1-\Delta}) f + \frac{\sin(t\sqrt{1-\Delta})}{\sqrt{1-\Delta}} g \bigg)\bigg\|_{H^{-1/2}} \lesssim \| f \|_{H^{1/2}} + \| g \|_{H^{-1/2}}
\end{equation*}
by Plancherel's theorem, and see
\begin{align*}
\sup_{t\in\mathbb{R}}\bigg\|\frac{d}{dt} \int_{0}^{t} \frac{\sin((t-s)\sqrt{1-\Delta})} {\sqrt{1-\Delta}}F(\cdot,s)ds\bigg\|_{H^{-1/2}}
&\lesssim \| V \|_{\mathcal{F}^{p}}^{1/2} \| u \| _{L_{x,t}^2(|V|)}.
\end{align*}
by \eqref{inhomo}.
It then follows from \eqref{KGSOL} that $u_t\in C([0,\infty);H^{-1/2}(\mathbb{R}^{n}))$.

Combining \eqref{KGSOL}, \eqref{WL2homo} and \eqref{WL2inhomo} directly yields the desired estimate \eqref{WL2}.
It remains only to show the local smoothing estimate \eqref{LS}.
But this follows immediately from applying \eqref{KGSOL}, \eqref{LShomo}, \eqref{LSinhomo}, and then \eqref{WL2}.

\section{Proof of Theorem \ref{thm2}}\label{sec5}

In this final section we prove Theorem \ref{thm2} by making use of the weighted $L^2$ estimates obtained in Sections \ref{sec2} and \ref{sec3}.
Recalling \eqref{KGSOL}, we first write the solution $u$ of \eqref{KGE} as
\begin{equation}\label{sol}
u= \cos(t\sqrt{1-\Delta})f + \frac{\sin(t\sqrt{1-\Delta})}{\sqrt{1-\Delta}}g + \mathscr{S}u.
\end{equation}
Applying the following Strichartz estimate for the free case (see \eqref{tre}),
\begin{equation}\label{Shomo}
\|e^{it\sqrt{1-\Delta}}f\|_{L_t^q H_r^\sigma} \lesssim \|f\|_{H^{1/2}},
\end{equation}
to the first two terms of \eqref{sol}, we then have
\begin{equation*}
\bigg\|\cos(t\sqrt{1-\Delta})f + \frac{\sin(t\sqrt{1-\Delta})}{\sqrt{1-\Delta}}g\bigg\|_{L_t^q H_r^\sigma}
\lesssim \|f\|_{H^{1/2}} + \|g\|_{H^{-1/2}} 
\end{equation*}
under the same conditions on $(q,r)$ and $\sigma$ as in Theorem \ref{thm2}.
Now it remains to show that
\begin{align*}\label{Sinhomo}
\|\mathscr{S}u\|_{L^q_t H^\sigma_r}&= \bigg\|\int_{0}^{t} \frac{\sin((t-s)\sqrt{1-\Delta})}{\sqrt{1-\Delta}}\big(V(\cdot)u(\cdot,s)\big) ds\bigg\|_{L^q_t H^\sigma_r}\\
&\lesssim \|V\|_{\mathcal{F}^p} \big( \|f\|_{H^{1/2}} + \|g\|_{H^{-1/2}} \big)
\end{align*}
for the same $(q,r)$ and $\sigma$.
By duality, it suffices to show that
\begin{equation}\label{D-Str}
	\begin{split}
	\bigg<\langle \nabla \rangle^{\sigma}\int_{0}^{t} \frac{\sin((t-s)\sqrt{1-\Delta})}{\sqrt{1-\Delta}}&\big(V(\cdot)u(\cdot,s)\big) ds, G \bigg>_{x,t}\\
	&\lesssim \|V\|_{\mathcal{F}^p} \Big(\|f\|_{H^{1/2}} + \|g\|_{H^{-1/2}}\Big)\|G\|_{L_{t}^{q'}L_{x}^{r'}}.
	\end{split}
\end{equation}
The left-hand side of \eqref{D-Str} is equivalent to
\begin{align}\label{eq6}
\int_{-\infty}^{\infty}\int_{0}^{t}\Big< \langle \nabla \rangle^{\sigma-1}&\sin\big((t-s)\sqrt{1-\Delta}\big)\big(V(\cdot)u(\cdot,s)\big),G\Big>_{x}dsdt\nonumber\\
&=\int_{-\infty}^{\infty}\int_{0}^{t} \big< Vu, \langle \nabla \rangle^{\sigma-1} \sin\big((t-s)\sqrt{1-\Delta}\big)G\big>_{x}dsdt\nonumber\\
&=\bigg< V^{1/2}u, V^{1/2} \langle \nabla \rangle^{\sigma-1} \int_{s}^{\infty} \sin\big((t-s)\sqrt{1-\Delta}\big)G \,dt\bigg >_{x,s}.
\end{align}
Using H\"older's inequality, \eqref{eq6} is bounded by
\begin{equation*}
\|u\|_{L_{x,s}^{2} (|V|)} \bigg\| \langle \nabla \rangle^{\sigma-1}\int_{s}^{\infty} \sin\big((t-s)\sqrt{1-\Delta}\big)G \,dt\bigg\|_{L_{x,s}^{2}(|V|)}.
\end{equation*}
We will show that
\begin{equation}\label{B1-Str}
\|u\|_{L_{x,t}^{2}(|V|)} \lesssim \|V\|_{\mathcal{F}^p}^{1/2} \big(\|f\|_{H^{1/2}} + \|g\|_{H^{-1/2}}\big)
\end{equation}
and
\begin{equation}\label{B2-Str}
\bigg\| \langle \nabla \rangle^{\sigma-1}\int_{t}^{\infty} \sin\big((t-s)\sqrt{1-\Delta}\big)G\,ds \bigg\|_{L_{x,t}^{2}(|V|)} \lesssim \|V\|_{\mathcal{F}^p}^{1/2} \|G\|_{L_{t}^{q'} L_{x}^{r'}}.
\end{equation}
Then the desired estimate \eqref{D-Str} is proved.

To show the first estimate \eqref{B1-Str}, we apply \eqref{homo} and \eqref{WLinhomo} to \eqref{sol} to get
\begin{equation}\label{eq232}
\begin{split}
\|u\|_{L^2_{x,t}(|V|)} \lesssim \|V\|^{1/2}_{\mathcal{F}^p} \big(\|f\|_{H^{1/2}} + \|g\|_{H^{-1/2}}\big)+ \|V\|_{\mathcal{F}^p}\|u\|_{L^2_{x,t}(|V|)}.
\end{split}
\end{equation}
Since we are assuming that $\|V\|_{\mathcal{F}^p}$ is small enough, the last term on the right-hand side of \eqref{eq232}
can be absorbed into the left-hand side.
Hence, we get \eqref{B1-Str}.
For the second estimate \eqref{B2-Str}, we first note that
\begin{align*}
\Big\| \langle \nabla \rangle^{\sigma-1} \int_{-\infty}^{\infty} \sin\big((t-s)&\sqrt{1-\Delta}\big)G \,ds \Big\|_{L_{x,t}^{2}(|V|)}\\
&\lesssim \Big\| \langle \nabla \rangle^{\sigma-1} e^{it\sqrt{1-\Delta}} \int_{-\infty}^{\infty} e^{-is\sqrt{1-\Delta}}\,G \,ds \Big\|_{L_{x,t}^{2}(|V|)}\\
&\lesssim \|V\|_{\mathcal{F}^p}^{1/2}\Big\| \langle \nabla \rangle^{\sigma-1/2} \int_{-\infty}^{\infty} e^{-is\sqrt{1-\Delta}}\,G\, ds \Big\|_{L^2}\\
&\lesssim \|V\|_{\mathcal{F}^p}^{1/2} \| G \|_{L_{t}^{q'} L_{x}^{r'}}.
\end{align*}
using \eqref{homo} and the dual estimate of \eqref{Shomo}.
We then use the following Christ-Kiselev lemma (\cite{CK}) to conclude
\begin{equation*}
\Big\|\langle \nabla \rangle^{\sigma-1} \int_{-\infty}^{t} \sin\big((t-s)\sqrt{1-\Delta}\big) G\,ds \Big\|_{L_{x,t}^{2}(|V|)}
\lesssim \|V\|_{\mathcal{F}^{p}}^{1/2} \|G\|_{L_{t}^{q'}L_{x}^{r'}}
\end{equation*}
for $2>q'$, which yields \eqref{B2-Str} simply by changing some variables.

\begin{lem}[Christ-Kiselev lemma]\label{CK}
Let $X$ and $Y$ be two Banach spaces and let $T$ be a bounded linear operator from $L^{\alpha}(\mathbb{R};X)$ to $L^{\beta}(\mathbb{R};Y)$ such that
	\begin{equation*}
	Tf(t)=\int_{-\infty}^{\infty}K(t,s)f(s)\,ds.
	\end{equation*}
	Then the operator
	\begin{equation*}
	\widetilde{T}f(t)=\int_{-\infty}^{t}K(t,s)f(s)\,ds
	\end{equation*}
	has the same boundedness when $\beta > \alpha$, and $\|\widetilde{T}\| \lesssim \|T\|$.
\end{lem}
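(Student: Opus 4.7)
The plan is to prove the lemma via a dyadic decomposition of $\mathbb{R}$ adapted to the mass distribution of $f$, following the classical strategy of Christ and Kiselev. By homogeneity I may assume $\|f\|_{L^\alpha(\mathbb{R};X)}=1$, and define the cumulative mass
\[
F(t)=\int_{-\infty}^{t}\|f(s)\|_X^\alpha\,ds,
\]
which is non-decreasing from $0$ to $1$. For each level $k\ge 0$ and index $0\le j<2^k$, set $I_j^{(k)}=F^{-1}\bigl([j2^{-k},(j+1)2^{-k})\bigr)$, so that at scale $k$ the intervals $\{I_j^{(k)}\}_j$ partition $\mathbb{R}$ with equal $f$-mass $\|\chi_{I_j^{(k)}}f\|_{L^\alpha}^\alpha=2^{-k}$, and the partitions nest as a binary tree $I_j^{(k)}=I_{2j}^{(k+1)}\sqcup I_{2j+1}^{(k+1)}$.

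The key observation is that every pair $s<t$ has a unique smallest scale $k+1$ at which $s$ and $t$ first lie in distinct dyadic intervals, and at that scale $s$ must belong to a left child and $t$ to the corresponding right child of their common parent. This gives the disjoint identity
\[
\chi_{\{s<t\}}(s,t)=\sum_{k=0}^{\infty}\sum_{j=0}^{2^k-1}\chi_{I_{2j}^{(k+1)}}(s)\,\chi_{I_{2j+1}^{(k+1)}}(t),
\]
which, inserted into $\widetilde{T}f(t)=\int K(t,s)\chi_{\{s<t\}}f(s)\,ds$, yields the operator decomposition
\[
\widetilde{T}f(t)=\sum_{k=0}^{\infty}\sum_{j=0}^{2^k-1}\chi_{I_{2j+1}^{(k+1)}}(t)\,T\bigl(\chi_{I_{2j}^{(k+1)}}f\bigr)(t).
\]
I would then estimate $\widetilde{T}f$ in $L^\beta(\mathbb{R};Y)$ by applying the triangle inequality in $k$, then for each fixed $k$ exploiting the disjointness of the supports $\{I_{2j+1}^{(k+1)}\}_j$ to pull the $j$-sum out of $\|\cdot\|_{L^\beta}^\beta$, and finally invoking $\|T\|_{L^\alpha\to L^\beta}<\infty$ together with $\|\chi_{I_{2j}^{(k+1)}}f\|_{L^\alpha}^\alpha=2^{-(k+1)}$. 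This chain gives
\[
\|\widetilde{T}f\|_{L^\beta}\le\|T\|\sum_{k=0}^{\infty}\bigl(2^k\cdot 2^{-(k+1)\beta/\alpha}\bigr)^{1/\beta}=\|T\|\,2^{-1/\alpha}\sum_{k=0}^{\infty}2^{k(1/\beta-1/\alpha)},
\]
and the geometric series converges precisely when $\beta>\alpha$, with a sum depending only on $\alpha$ and $\beta$; restoring the normalization of $f$ yields $\|\widetilde{T}\|\lesssim\|T\|$.

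The main obstacle is purely technical: setting up the dyadic grid requires care when $\|f(\cdot)\|_X^\alpha$ is not absolutely continuous, so that $F$ has jumps or flat intervals and the preimages $F^{-1}(\cdot)$ are ambiguous. I would handle this by the standard approximation device of first working with an $L^\alpha$-dense class of $f$ for which $F$ is continuous and strictly increasing, proving the bound with a constant independent of the approximation, and then passing to the limit. Beyond this, verifying the combinatorial identity for $\chi_{\{s<t\}}$ and tracking constants through the dyadic sum is essentially bookkeeping, and the exponent condition $\beta>\alpha$ emerges naturally as the threshold for geometric decay in $k$.
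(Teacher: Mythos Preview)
The paper does not give its own proof of this lemma; it simply states it and cites the original reference \cite{CK}. Your proposal correctly outlines the standard Christ--Kiselev argument via a dyadic Whitney decomposition adapted to the cumulative mass of $f$, and the bookkeeping you describe (disjoint supports at each scale, geometric summation in $k$ under $\beta>\alpha$) is exactly the mechanism that makes the proof work.
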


\

\noindent\textbf{Acknowledgment.}
The authors would like to thank the anonymous referees for careful reading of the manuscript
and some helpful comments.


\begin{thebibliography}{99}

\bibitem{AH} S. Agmon and L. H\"{o}rmander, \textit{Asymptotic properties of solutions of differential equations with simple characteristics},
J. Analyse Math. 30 (1976), 1-38.

\bibitem{BBCRV} J. A. Barcel\`o, J. M. Bennett, A. Carbery, A. Ruiz and M. C. Vilela, \textit{A note on weighted estimates for the Schr\"odinger operator}, Rev. Mat. Complut. 21 (2008), 481-488.

\bibitem{B} P. Brenner, \textit{On space-time means and everywhere defined scattering operators for nonlinear Klein-Gordon equations},
Math. Z. 186 (1984), 383-391.

\bibitem{CS} S. Chanillo and E. Sawyer, \textit{Unique continuation for $\Delta + v$ and the C. Fefferman-Phong class},
Trans. Amer. Math. Soc. 318 (1990), 275-300.

\bibitem{CR} F. Chiarenza and A. Ruiz, \textit{Uniform $L^2$-weighted Sobolev inequalities}, Proc. Amer. Math. Soc. 112 (1991), 53-64.

\bibitem{CK} M. Christ and A. Kiselev, \textit{Maximal functions associated to filtrations}, J. Funct. Anal. 179 (2001), 409-425.

\bibitem{D'A} P. D'Ancona, \textit{Kato smoothing and Strichartz estimates for wave equations with magnetic potentials},
Comm. Math. Phys. 335 (2015), 1-16.

\bibitem{D'A2} P. D'Ancona, \textit{On large potential perturbations of the Schr\"odinger, wave and Klein-Gordon equations},
Commun. Pure Appl. Anal. 19 (2020), 609-640.

\bibitem{D'AF} P. D'Ancona and L. Fanelli, \textit{Strichartz and smoothing estimates for dispersive equations with magnetic potentials},
Comm. Partial Differential Equations 33 (2008), 1082-1112.

\bibitem{K} T. Kato, \textit{Wave operators and similarity for some non-selfadjoint operators}, Math. Ann. 162 (1966), 258-279.

\bibitem{KY} T. Kato and K. Yajima, \textit{Some examples of smooth operators and the associated smoothing effect},
Rev. Math. Phys. 1 (1989), 481-496.

\bibitem{RS} M. Reed and B. Simon, \textit{Methods of modern mathematical physics, II: fourier analysis,
self-adjointness}, Academic Press, New York, 1975.

\bibitem{RV} A. Ruiz and L. Vega, \textit{Local regularity of solutions to wave equations with time-dependent potentials}, Duke Math. J. 76 (1994), 913-940.

\bibitem{S} I. Seo, \textit{From resolvent estimates to unique continuation for the Schr\"odinger equation},
Trans. Amer. Math. Soc. 368 (2016), 8755-8784.

\bibitem{So} C. D. Sogge, \textit{Fourier integrals in classical analysis}, Cambridge Tracts in Mathematics, 105. Cambridge University Press, Cambridge, 1993.

\bibitem{St} R. Strichartz, \textit{Restrictions of Fourier transforms to quadratic surfaces and decay of solutions of wave equations},
Duke Math. J. 44 (1977), 705-714.

\bibitem{ZZ} J. Zhang and J. Zheng,  \textit{Strichartz estimate and nonlinear Klein-Gordon equation on nontrapping scattering space},
J. Geom. Anal. 29 (2019), 2957-2984.

\end{thebibliography}
\end{document}